\documentclass[11pt]{amsart}

\usepackage{amscd}
\usepackage{amsmath, amssymb}
\usepackage{amsfonts}
\usepackage[all]{xy}
\newcommand{\de}{\partial}

\newcommand{\Ric}{\mathrm{Ric}}
\newcommand{\ov}[1]{\overline{#1}}

\newcommand{\ti}[1]{\tilde{#1}}

\newcommand{\ve}{\varepsilon}

\renewcommand{\leq}{\leqslant}
\renewcommand{\geq}{\geqslant}

\newtheorem{theorem}{Theorem}[section]
\newtheorem{lemma}[theorem]{Lemma}

\theoremstyle{definition}
\newtheorem{rk}[theorem]{Remark}

\title[Fibered Calabi-Yau manifolds without singular fibers]{Triviality of fibered Calabi-Yau manifolds without singular fibers}
\author[V. Tosatti]{Valentino Tosatti$^{*}$}
\thanks{$^{*}$Supported in part by a Sloan Research Fellowship and NSF grants DMS-1236969 and DMS-1308988.}
 \address{Department of Mathematics, Northwestern University, 2033 Sheridan Road, Evanston, IL 60201}
  \email{tosatti@math.northwestern.edu}
  \author[Y. Zhang]{Yuguang Zhang$^{\dagger}$}
  \thanks{$^{\dagger}$Supported in part  by NSFC-11271015.}
\address{Mathematical Sciences Center,  Tsinghua University,  Beijing 100084, P.R.China.}
\email{yuguangzhang76@yahoo.com}

\begin{document}
\begin{abstract}
In this note we show that if a compact K\"ahler manifold with trivial canonical bundle is the total space of a holomorphic fibration  without singular fibers, then the fibration is a holomorphic fiber bundle.
In the algebraic case, the fibration becomes trivial after a finite base change.
\end{abstract}
\maketitle

\section{Introduction}
Let $X$ be a compact K\"ahler manifold with $c_1(X)=0$ in $H^2(X,\mathbb{R})$, which we will call a Calabi-Yau manifold. Thanks to a fundamental theorem of Yau \cite{Y}, the class of Calabi-Yau manifolds is precisely the class of compact K\"ahler manifolds which admit Ricci-flat K\"ahler metrics. Furthermore, $X$ Calabi-Yau implies that the canonical bundle $K_X$ is holomorphically torsion \cite{Bo}.
Assume that there is a holomorphic submersion
with connected fibers $f:X\to Y$ to a compact K\"ahler manifold $Y$. Let $n=\dim X$, $m=\dim Y$ and assume that $0<m<n$. Then every fiber $X_y=f^{-1}(y)$ is a
Calabi-Yau manifold of dimension $n-m$. In other words, $X$ is Calabi-Yau and is also
the total space of a family of Calabi-Yau manifolds over $Y$.

In the case when $f$ is allowed to have singular fibers, this is the setup of the first author's paper \cite{To}, where it is proved that certain families of Ricci-flat K\"ahler metrics on $X$ collapse to $Y$ in a weak sense. More recently, Gross and the authors \cite{GTZ} proved that if the smooth fibers are tori then the collapsing happens in the $C^\infty$ topology, and in \cite{GTZ2} we identified the Gromov-Hausdorff limit when $Y$ is a curve.

The question which we study in this paper, which arose from these works, is whether one can classify such fibrations when no singular fibers are allowed. For such manifolds the smooth collapsing was proved by Fine \cite{Fi}. Here we show that these fibrations are very special. Our first result is:

\begin{theorem}\label{main}
If $f:X\to Y$ is a holomorphic submersion with connected fibers between projective manifolds with $K_X\cong\mathcal{O}_X$, then
there is a finite unramified covering $\ti{Y}\to Y$ with $K_{\ti{Y}}\cong\mathcal{O}_{\ti{Y}}$ such
that the pullback family $X\times_Y\ti{Y}\to \ti{Y}$ is trivial, i.e. it is
biholomorphic to the product family $\ti{Y}\times F\to\ti{Y}$, where $F$ is a projective manifold with trivial canonical bundle.
\end{theorem}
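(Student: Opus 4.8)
The plan is to establish three things in turn: that the fibers are Calabi-Yau, that $c_1(Y)=0$ together with isotriviality of the family, and finally that isotriviality upgrades to triviality after a finite \'etale base change. To begin, since $f$ is a submersion the relative canonical bundle is $K_{X/Y}=K_X\otimes f^*K_Y^{-1}\cong f^*K_Y^{-1}$, which restricts trivially to every fiber; hence each $X_y$ is a Calabi-Yau manifold. I would then form the Hodge line bundle $f_*\omega_{X/Y}$: because $h^0(X_y,K_{X_y})=1$ this is a line bundle, and by the projection formula $f_*\omega_{X/Y}\cong K_Y^{-1}$. Equipping it with the fiberwise $L^2$ (Hodge) metric, the classical curvature computation identifies its first Chern form with the Weil--Petersson form $\omega_{\mathrm{WP}}\ge 0$ of the family, so that $c_1(Y)=[\omega_{\mathrm{WP}}]$ is represented by a semipositive form and $-K_Y$ is nef. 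At this stage one has only this one-sided positivity.

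The main obstacle is to obtain the reverse inequality, i.e.\ to rule out positive variation of the complex structure of the fibers, and this is precisely where the hypothesis of no singular fibers must enter. I would argue that $-K_Y\cdot C=0$ for every rational curve $C\subset Y$. Pulling the family back along the normalization $\mathbb{P}^1\to C\hookrightarrow Y$ produces a \emph{smooth} family over $\mathbb{P}^1$ (no singular fibers are created), whose period map is holomorphic and horizontal; since $\mathbb{P}^1$ admits no metric of seminegative holomorphic sectional curvature, the Ahlfors--Schwarz lemma (equivalently, the Arakelov inequality over $\mathbb{P}^1$ in the absence of singular fibers) forces the period map to be constant, whence $\int_C\omega_{\mathrm{WP}}=-K_Y\cdot C=0$. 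A uniruled $Y$ would contain a free rational curve with $-K_Y\cdot C\ge 2$, a contradiction; therefore $Y$ is not uniruled, and by the theorem of Boucksom--Demailly--P\u{a}un--Peternell $K_Y$ is pseudo-effective.

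Now $c_1(Y)$ is nef while $-c_1(Y)=c_1(K_Y)$ is pseudo-effective. Pairing against $\omega_Y^{m-1}$ for a K\"ahler form $\omega_Y$, nefness gives $\int_Y\omega_{\mathrm{WP}}\wedge\omega_Y^{m-1}\ge 0$ while pseudo-effectivity gives $\le 0$, so the integral vanishes; as $\omega_{\mathrm{WP}}\ge 0$ pointwise this forces $\omega_{\mathrm{WP}}\equiv 0$. Two conclusions follow simultaneously. First, $c_1(Y)=0$, so $Y$ is projective with numerically trivial canonical class, hence $K_Y$ is torsion and there is a finite unramified cover $\ti{Y}\to Y$ with $K_{\ti{Y}}\cong\mathcal{O}_{\ti{Y}}$. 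Second, the Kodaira--Spencer map vanishes identically (contraction with the nowhere-vanishing relative holomorphic form being injective), so the family is infinitesimally trivial everywhere and therefore isotrivial, with all fibers biholomorphic to a fixed Calabi-Yau manifold $F$.

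Finally, by the Fischer--Grauert theorem an isotrivial proper submersion is a holomorphic fiber bundle with fiber $F$ and structure group $\mathrm{Aut}(F)$. I would then trivialize this bundle after a further finite \'etale base change, passing first to a cover that kills the monodromy in the component group $\pi_0(\mathrm{Aut}(F))$ and then trivializing the residual torsor under the complex torus $\mathrm{Aut}^0(F)$, using that the base is projective with trivial canonical bundle. Absorbing this cover into $\ti{Y}$ (which stays Calabi-Yau, since \'etale covers preserve triviality of the canonical bundle), the pullback family becomes biholomorphic to $\ti{Y}\times F$, as required. I expect the two genuinely delicate points to be the rigidity over rational curves in the second paragraph and the trivialization of the $\mathrm{Aut}^0(F)$-torsor in this last step.
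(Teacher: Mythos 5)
Your proposal is correct in its essentials, but it takes a genuinely different route from the paper's proof, so it is worth comparing the two. The paper proves the stronger Theorem \ref{triv} (valid for arbitrary compact K\"ahler $X$, $Y$): it constructs a K\"ahler metric on $Y$ with $\mathrm{Ric}\geq 0$ via a Monge--Amp\`ere argument (\cite[Proposition 4.1]{To}), lifts to the universal cover, and applies a refined Yau--Royden Schwarz lemma (Lemma \ref{schwarz}, which requires Yau's generalized maximum principle on complete manifolds and Lu's theorem that the Hodge metric is K\"ahler on horizontal slices) to show that the global period map is constant. You instead stay inside projective geometry: you read off nefness of $-K_Y$ from the Weil--Petersson representative of $c_1(Y)$, use period-map rigidity only over rational curves (a much softer, compact Schwarz-type statement) to get $-K_Y\cdot C=0$ for every rational curve, exclude uniruledness, invoke \cite[Corollary 0.3]{BDPP} to make $K_Y$ pseudoeffective, and kill $\omega_{WP}$ by the clean numerical argument ``nef plus anti-pseudoeffective implies zero''. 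This is close in spirit to H\"oring's alternative proof recorded in Remark \ref{rkh} (same uniruled/non-uniruled dichotomy via BDPP), but you replace Viehweg--Zuo by rational-curve period rigidity, and the canonical bundle formula plus Ambro's argument by integration against $\omega_Y^{m-1}$, which is a genuine simplification. What the paper's route buys is that it survives in the K\"ahler category (yielding Theorems \ref{triv} and \ref{main2}); what yours buys is that it avoids the Monge--Amp\`ere construction and the complete-manifold Schwarz lemma, at the cost of the deep input of BDPP and of being intrinsically projective. Two points of precision you should patch: (i) the step from vanishing Kodaira--Spencer maps to isotriviality is Kodaira's theorem \cite[Theorem 4.6]{Ko}, whose hypothesis that $\dim H^1(X_y,\Theta_{X_y})$ be independent of $y$ holds here because this space is isomorphic to $H^{n-m-1,1}(X_y)$ by the Calabi--Yau condition; (ii) in your final trivialization step, $\pi_0(\mathrm{Aut}(F))$ can be infinite, so before killing monodromy by a finite cover you must reduce the structure group to the automorphisms preserving a K\"ahler (polarization) class, whose component group is finite by Lieberman/Fujiki \cite{Li, Fu}; this reduction together with the torsor argument you sketch is exactly the content of \cite[Lemma 17]{KL}, which the paper cites at this point (Lemma \ref{finite}), and it is precisely where projectivity enters, as the counterexample in Remark \ref{rk1} shows it must.
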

In particular, this implies that the canonical bundle of $Y$ is holomorphically torsion, which is not a priori clear.
As a simple consequence we have:
\begin{theorem}\label{main1}
If $f:X\to Y$ is a holomorphic submersion with connected fibers between projective manifolds with $X$ Calabi-Yau, then there are projective manifolds
$B, F$ with trivial canonical bundles, and   finite unramified coverings $g:B\times F\to X$ and $h:B\to Y$ such that the diagram
\begin{equation}\label{diag}
\begin{CD}
 B\times F @>{g}>>X\\
@V{\pi_B}VV @V{f}VV \\
 B @>{h}>>Y
\end{CD}
\end{equation}
commutes.
\end{theorem}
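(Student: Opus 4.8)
The plan is to deduce Theorem~\ref{main1} from Theorem~\ref{main} by first passing to a finite cover of $X$ on which the canonical bundle becomes honestly trivial, and then correcting the fibers to be connected by means of a Stein factorization. Since $X$ is Calabi-Yau, its canonical bundle $K_X$ is holomorphically torsion by \cite{Bo}; let $k\geq 1$ be its order in $\mathrm{Pic}(X)$, so that $K_X^{\otimes k}\cong\mathcal{O}_X$. Using this isomorphism I would form the associated cyclic \'etale cover $\pi:\ti{X}\to X$ of degree $k$, which is connected precisely because $k$ is the order of $K_X$, and which tautologically satisfies $\pi^*K_X\cong\mathcal{O}_{\ti{X}}$. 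As $\pi$ is unramified we then have $K_{\ti{X}}=\pi^*K_X\cong\mathcal{O}_{\ti{X}}$, and $\ti{X}$ is again projective, being a finite cover of a projective manifold.

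Next I would consider the composition $\ti{f}=f\circ\pi:\ti{X}\to Y$, which is a proper holomorphic submersion, but whose fibers $\pi^{-1}(X_y)$ are \'etale covers of $X_y$ and so need not be connected. To remedy this I would take the Stein factorization $\ti{X}\xrightarrow{f'}Y'\xrightarrow{\sigma}Y$, so that $f'$ has connected fibers and $\sigma$ is finite. The key point, which I expect to be the main technical step, is that $\sigma:Y'\to Y$ is in fact a finite unramified covering with $Y'$ smooth and projective. Indeed, since $\ti{f}$ is a proper submersion it is a smooth fiber bundle by Ehresmann's theorem, so the number $d$ of connected components of its fibers is constant; by Grauert's theorem $\ti{f}_*\mathcal{O}_{\ti{X}}$ is locally free of rank $d$, so $\sigma$ is finite flat of degree $d$, while the set-theoretic fiber $\sigma^{-1}(y)$ consists of exactly $d$ points, one for each component of $\ti{f}^{-1}(y)$. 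A scheme-theoretic fiber of length $d$ supported on $d$ points must be reduced, so $\sigma$ is \'etale, and hence $Y'$ is smooth and projective. Thus $f':\ti{X}\to Y'$ is a holomorphic submersion with connected fibers between projective manifolds with $K_{\ti{X}}\cong\mathcal{O}_{\ti{X}}$.

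Now I would apply Theorem~\ref{main} to $f':\ti{X}\to Y'$. This produces a finite unramified covering $\rho:B\to Y'$ with $K_B\cong\mathcal{O}_B$, together with a biholomorphism $\ti{X}\times_{Y'}B\cong B\times F$ over $B$, where $F$ is a projective manifold with $K_F\cong\mathcal{O}_F$ and where $\pi_B$ corresponds to the natural projection $\ti{X}\times_{Y'}B\to B$. Finally I would set $h:B\to Y$ to be the composition $B\xrightarrow{\rho}Y'\xrightarrow{\sigma}Y$, which is finite unramified as a composition of such, and $g:B\times F\cong\ti{X}\times_{Y'}B\to\ti{X}\xrightarrow{\pi}X$, whose first arrow is the base change of the finite unramified map $\rho$ and is therefore itself finite unramified, so that $g$ is a finite unramified covering. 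To verify that \eqref{diag} commutes one checks directly that, on a point $(\ti{x},b)\in\ti{X}\times_{Y'}B$ with $f'(\ti{x})=\rho(b)$, both $f\circ g$ and $h\circ\pi_B$ take the value $\sigma(f'(\ti{x}))=\sigma(\rho(b))$, completing the proof.
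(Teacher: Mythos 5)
Your proposal is correct and follows essentially the same route as the paper: pass to a finite unramified cover trivializing $K_X$, Stein-factorize the composed submersion to restore connected fibers, apply Theorem \ref{main}, and compose the resulting covers. The only difference is cosmetic: where you prove by hand that the Stein factorization map $\sigma:Y'\to Y$ is \'etale with $Y'$ smooth projective (via Ehresmann, Grauert, and reducedness of the scheme-theoretic fibers), the paper simply cites \cite[Lemma 2.4]{FG2}, which records exactly this fact.
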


If we drop the projectivity assumption, then the statement of Theorem \ref{main} is false (see Remark \ref{rk1} below), and has to be modified as follows:

\begin{theorem}\label{main2}
If $f:X\to Y$ is a holomorphic submersion with connected fibers between compact K\"ahler manifolds with $K_X\cong\mathcal{O}_X$, then
$f$ is a holomorphic fiber bundle with fiber $F$ a compact K\"ahler manifold with $K_F\cong\mathcal{O}_F$ and with base $Y$ Calabi-Yau.
If furthermore  either $b_1(F)=0$ or $F$ is a torus and $b_1(Y)=0$, then there is a finite unramified covering $\ti{Y}\to Y$ with $K_{\ti{Y}}\cong\mathcal{O}_{\ti{Y}}$ such
that the pullback bundle $X\times_Y\ti{Y}\to \ti{Y}$ is holomorphically trivial.
\end{theorem}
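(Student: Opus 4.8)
The plan is to establish the two assertions in turn: first that $f$ is a holomorphic fiber bundle with Calabi-Yau base and fiber, and then, under the extra hypotheses, that it becomes holomorphically trivial after a finite unramified base change. To begin, I would record that every fiber is itself Calabi-Yau. Since $f$ is a submersion, the normal bundle of a fiber $X_y$ is the trivial bundle $f^{*}T_{Y,y}\otimes\mathcal{O}_{X_y}$, so adjunction gives $K_{X_y}\cong K_X|_{X_y}\cong\mathcal{O}_{X_y}$; hence each $X_y$ is a compact K\"ahler manifold with trivial canonical bundle, carrying a Ricci-flat metric by Yau's theorem \cite{Y}. I would also compute the relevant direct image: since $f_{*}\mathcal{O}_X=\mathcal{O}_Y$ and the trivialization of $K_X$ identifies $K_{X/Y}\cong f^{*}K_Y^{-1}$, one gets $f_{*}K_{X/Y}\cong K_Y^{-1}$, a line bundle on $Y$.

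The heart of the matter is to show that $Y$ is Calabi-Yau and that the fibration is isotrivial, and this is the step I expect to be the main obstacle. Here I would bring in the Ricci-flat geometry of the total space: fixing a Ricci-flat metric on $X$ and collapsing the fibers as in \cite{To}, the absence of singular fibers lets me invoke Fine's smooth collapsing \cite{Fi} to produce a genuine smooth metric $\omega_{\mathrm{can}}$ on all of $Y$. Its Ricci curvature is governed by the variation of the fibers, i.e. by the Weil-Petersson form $\omega_{\mathrm{WP}}\ge 0$, which is also the curvature of the natural $L^{2}$-metric on $f_{*}K_{X/Y}\cong K_Y^{-1}$, so that $c_1(Y)$ is represented by $\omega_{\mathrm{WP}}$. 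The crux is to show that this class vanishes: using that the total space is Ricci-flat, so that no positive curvature can concentrate precisely because there is no discriminant locus, I would argue that $\omega_{\mathrm{WP}}$ represents the zero class, whence the semipositive form $\omega_{\mathrm{WP}}$ vanishes identically. This yields simultaneously $c_1(Y)=0$, so that $Y$ is Calabi-Yau, and the vanishing of the Kodaira-Spencer map of $f$. A proper family with identically vanishing Kodaira-Spencer map is locally trivial, so all fibers are biholomorphic to a fixed $F$ and, by Fischer-Grauert, $f$ is a holomorphic fiber bundle; the fiber $F$ has $K_F\cong\mathcal{O}_F$ by the first step. This proves the first assertion.

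For the trivialization I would analyze the monodromy of the now-established fiber bundle. When $b_1(F)=0$, the Beauville-Bogomolov decomposition of $F$ has no torus factor, so $H^{0}(F,T_F)=0$ and $\mathrm{Aut}_0(F)$ is trivial; the bundle is therefore flat with discrete structure group, classified by a representation $\rho\colon\pi_1(Y)\to\mathrm{Aut}(F)$. Since $Y$ is Calabi-Yau, a finite unramified cover of $Y$ splits as a torus times a simply connected Calabi-Yau, so its fundamental group is virtually abelian; pulling back and using rigidity of isotrivial holomorphic families over tori to see that $\rho$ has finite image, I would pass to a further finite cover $\ti{Y}\to Y$ on which the monodromy is trivial, making the pullback bundle holomorphically trivial. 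When $F$ is a torus, $\mathrm{Aut}_0(F)$ is the positive-dimensional group of translations, so the bundle is an extension whose class is built out of $H^{1}(Y,-)$; the hypothesis $b_1(Y)=0$ kills this translation part, and the remaining finite linear monodromy is removed by a finite cover, again yielding a trivial pullback. In both cases $\ti{Y}$ can be taken with $K_{\ti{Y}}\cong\mathcal{O}_{\ti{Y}}$, being \'etale over the Calabi-Yau manifold $Y$.

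The step I expect to fight hardest for is the vanishing of $[\omega_{\mathrm{WP}}]$, that is, converting ``no singular fibers'' into the assertion $c_1(Y)=0$; pinning down the precise collapsing equation and its sign, and ruling out a strictly positive contribution to $c_1(Y)$, is exactly where the no-singular-fibers hypothesis must be used decisively, since with singular fibers the base can certainly acquire positive curvature (as for elliptic $K3$ surfaces over $\mathbb{P}^{1}$).
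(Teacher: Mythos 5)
Your proposal correctly reduces the theorem to two steps, but in both steps the decisive ingredient is missing, so there is a genuine gap rather than an alternative proof.

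The first and most serious gap is exactly the step you flag as the one you would ``fight hardest for'': nothing in your sketch actually converts the absence of singular fibers into $[\omega_{\mathrm{WP}}]=0$. The phrase ``no positive curvature can concentrate because there is no discriminant locus'' is a restatement of the desired conclusion, not an argument: a priori $c_1(Y)=[\omega_{\mathrm{WP}}]$ is a semipositive class and nothing you write excludes its being nonzero. This is precisely where the paper has to work. It constructs a K\"ahler metric $\omega$ on $Y$ with $\mathrm{Ric}(\omega)=\omega_{WP}\geq 0$ via a Monge--Amp\`ere construction (\cite[Proposition 4.1]{To}; this is not Fine's collapsing theorem, which plays no role in the proof), lifts to the universal cover $\ti{Y}$, and considers the period map $\mathcal{P}:\ti{Y}\to\mathcal{D}$ into Griffiths' classifying space of polarized real Hodge structures. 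The map $\mathcal{P}$ is horizontal, the Hodge metric on $\mathcal{D}$ has holomorphic sectional curvature $\leq -A<0$ in horizontal directions \cite{GS}, and a Yau--Royden Schwarz lemma \cite{Yau, Ro} for complete K\"ahler manifolds with nonnegative Ricci curvature forces $\mathcal{P}$ to be constant. The infinitesimal Torelli theorem for Calabi--Yau manifolds (injectivity of the cup product $H^1(X_y,\Theta_{X_y})\to T_h(\mathcal{D})$) then gives vanishing of the Kodaira--Spencer map, \cite[Theorem 4.6]{Ko} and Fischer--Grauert \cite{FG} give the bundle structure, and only as a \emph{consequence} of isotriviality does one get $\omega_{WP}\equiv 0$, hence $c_1(Y)=0$. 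Note that the logical order is opposite to yours: the paper cannot (and does not) prove $[\omega_{WP}]=0$ first. Without the period map/Schwarz lemma mechanism, or a genuine substitute (e.g.\ H\"oring's argument via the canonical bundle formula when $K_Y$ is pseudoeffective and Viehweg--Zuo when $Y$ is uniruled, sketched in Remark \ref{rkh}), your proof of the first assertion does not start.

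The trivialization step also has two holes, both at places where the K\"ahlerness of $X$ must be used and is not. (a) When $b_1(F)=0$, your monodromy representation $\rho:\pi_1(Y)\to\mathrm{Aut}(F)$ can perfectly well have infinite image even though $\pi_1(Y)$ is virtually abelian, because $\mathrm{Aut}(F)$ may be an infinite discrete group (e.g.\ a K3 surface with infinite automorphism group); ``rigidity of isotrivial families over tori'' is not a theorem you can invoke for finiteness. What makes the image finite is that it preserves the K\"ahler class $[\omega_X|_F]$, together with the finiteness of $\mathrm{Aut}(F,[\omega_X|_F])/\mathrm{Aut}^0(F)$ due to Lieberman and Fujiki \cite{Li, Fu}, which is the paper's route. (b) When $F$ is a torus, a principal $F$-bundle over a simply connected base is classified by $H^1(Y,\mathcal{F})$, and besides the part coming from $H^1(Y,\mathcal{O})^n$ (which $b_1(Y)=0$ does kill) there is the Chern class in $H^2(Y,\Lambda)$, which your sketch never addresses; it is not killed by simple connectivity, as elliptic principal bundles over $\mathbb{P}^1$ with nonzero Chern class (Hopf surfaces) show. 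The paper disposes of it using Blanchard's theorem \cite{Bl} --- K\"ahlerness of the total space forces the Chern class to be torsion --- together with torsion-freeness of $H^2(Y,\Lambda)$, which follows from $H_1(Y,\mathbb{Z})=0$. Finally, a minor point: being \'etale over a Calabi--Yau manifold (meaning $c_1=0$ in $H^2(Y,\mathbb{R})$) only gives torsion canonical bundle; to arrange $K_{\ti{Y}}\cong\mathcal{O}_{\ti{Y}}$ one must pass to a further finite unramified cover, which is harmless but needs to be said.
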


In the literature there are several related results. For example Fang-Lu \cite[Corollary 1.3]{FL} have shown that if the fibers of $f$ are primitive Calabi-Yau
manifolds which satisfy a certain cohomological assumption, then the family is isotrivial (in the sense that all fibers over a Zariski open set of $Y$ are biholomorphic to each other).
This result holds without the assumption that $X$ is Calabi-Yau. More recently, Zhang-Zuo \cite[Corollary 2.5]{ZZ}
that if $Y$ is simply connected and $X$ has trivial canonical bundle and $h^p(X,\mathcal{O}_X)=0$ for $0<p<n$, then the family is isotrivial.
In the setup of Theorem \ref{main}, if one knows in addition that $\kappa(Y)\geq 0$, then an alternative proof of the theorem can be given using the canonical bundle formula \cite{FM} and arguing along the lines of \cite[Theorem 4.8]{A} (see also Remark \ref{rkh} below). If one knows furthermore that $Y$ is Calabi-Yau, then the conclusion that $f$ is a bundle also follows from \cite[Theorem 2]{Bo} (see also \cite[Theorem 3.1]{COP}).

On the other hand we make no assumptions on $Y$, and we first show that $Y$ has a K\"ahler metric with nonnegative Ricci curvature. Using Hodge theory, we get a holomorphic period map from the universal cover of $Y$ to a classifying space $\mathcal{D}$ of polarized real Hodge structures. This space has a Hermitian metric with strictly negative holomorphic sectional curvature in the horizontal directions. Using a version of Yau's Schwarz Lemma \cite{Yau} we show that the period map is constant, and the infinitesimal Torelli theorem for Calabi-Yau manifolds implies that all fibers are biholomorphic, and then the main theorem follows.\\

This note is organized as follows: in section \ref{sect1} we deduce Theorem \ref{main1} from Theorem \ref{main}, and show that to prove Theorem \ref{main} it is enough to prove that all fibers
are biholomorphic. This statement is then proved in section \ref{sect2}. Theorem \ref{main2} is proved in section \ref{sect3}.\\

\noindent {\bf Acknowledgements:} The first named author is grateful to Florin Ambro, Keiji Oguiso and Xiaowei Wang for useful communications. The second named author would like to thank Professors Mark Gross and Zhiqin Lu for some discussions. Both authors are grateful to Andreas H\"oring for communicating the content of Remark \ref{rkh}, to Xiaokui Yang for pointing out an inaccuracy in a previous draft, and to the referee for useful suggestions.

\section{Preliminary results}\label{sect1}

\begin{proof}[Proof of Theorem \ref{main1} assuming Theorem \ref{main}]
Since $K_X$ is holomorphically torsion, there is a finite unramified covering $X'\to X$ with $K_{X'}$ holomorphically trivial.
The composition $f':X'\to X\to Y$ is a holomorphic submersion, but its fibers might be disconnected.
Consider its Stein factorization $X'\overset{f''}{\to}Y'\overset{p}{\to}Y$, which has the properties that $Y'$ is a projective manifold,
$p$ is a finite unramified covering and $f''$ is a holomorphic submersion with connected fibers (see e.g. \cite[Lemma 2.4]{FG2}).
Therefore $f'':X'\to Y'$ satisfies the hypotheses of Theorem \ref{main}, and hence there
is a finite unramified covering $\ti{Y}\to Y'$ such that the pullback family $X'\times_{Y'} \ti{Y}\to \ti{Y}$ is trivial.
We let $B=\ti{Y}$ and let $h:B\to Y$ be the composition $\ti{Y}\to Y'\to Y$. We have a biholomorphism $X'\times_{Y'} \ti{Y}\cong B\times F$, which composed with the map
$X'\times_{Y'}\ti{Y}\to X'\to X$ gives us a finite
unramified covering map $g:B\times F\to X$, and Theorem \ref{main1} follows.
\end{proof}

Note that from Theorem \ref{main} it follows that the fibers $X_y$ of the original family must all be biholomorphic to each other. In fact, it is enough to prove
this assertion to prove Theorem \ref{main}:

\begin{lemma}\label{finite} If the fibers $X_y$ of a family $f:X\to Y$ as in Theorem \ref{main} are all biholomorphic to each other, then the conclusion of Theorem \ref{main} holds.
\end{lemma}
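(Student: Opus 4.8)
The plan is to upgrade the isotriviality hypothesis to local triviality, then reduce the structure group of the resulting bundle in two stages—first its discrete part, then its torus part—by passing to finite unramified covers of $Y$, and finally to read off the triviality of $K_{\tilde{Y}}$.

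First, since $f$ is a proper holomorphic submersion all of whose fibers are biholomorphic, the Fischer--Grauert theorem shows that $f$ is a holomorphic fiber bundle with fiber $F:=X_{y_0}$ and structure group the complex Lie group $\mathrm{Aut}(F)$. Here $F$ is a projective manifold, and by adjunction $K_F\cong(K_X|_F)\otimes\det N_{X_{y_0}/X}$ together with the triviality of $K_X$ and of the normal bundle $N_{X_{y_0}/X}\cong(f^*T_Y)|_{X_{y_0}}$ gives $K_F\cong\mathcal{O}_F$, so $F$ is projective Calabi--Yau. Because $F$ is Calabi--Yau, every holomorphic vector field on $F$ is parallel for a Ricci-flat metric (Bochner), so the identity component $T:=\mathrm{Aut}_0(F)$ is a complex torus.

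Second, I would reduce the discrete part $\pi_0(\mathrm{Aut}(F))$. The holomorphic transition functions $U_{\alpha\beta}\to\mathrm{Aut}(F)$ composed with $\mathrm{Aut}(F)\to\pi_0(\mathrm{Aut}(F))$ are locally constant and define a monodromy representation $\rho:\pi_1(Y)\to\pi_0(\mathrm{Aut}(F))$. The restriction $\kappa:=[\omega_X]|_F$ of a Kähler class of $X$ is a Kähler class on $F$, and since it is the stalk of the global section of $R^2f_*\mathbb{R}$ induced by $[\omega_X]$, it is monodromy-invariant; hence $\mathrm{im}(\rho)$ lies in the image of $\mathrm{Aut}_\kappa(F)=\{g:g^*\kappa=\kappa\}$ in $\pi_0(\mathrm{Aut}(F))$, which is finite by the theorem of Lieberman and Fujiki. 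Passing to the finite unramified covering $Y_1\to Y$ attached to $\ker\rho$ and replacing $X$ by $X_1:=X\times_Y Y_1$ (still projective with $K_{X_1}\cong\mathcal{O}_{X_1}$, as $X_1\to X$ is étale), the structure group of $f_1:X_1\to Y_1$ reduces to $T$. As $T$ is abelian, the relative automorphism group $\mathrm{Aut}_0(X_1/Y_1)$ is the constant family $Y_1\times T$, so $X_1$ carries a global fiber-preserving holomorphic $T$-action.

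The main obstacle is the last stage: trivializing the associated principal $T$-bundle $P_1\to Y_1$ (equivalently realizing $X_1\cong Y_1\times F$) after a further finite unramified covering $\tilde{Y}\to Y_1$. Here I would invoke the Beauville--Bogomolov decomposition of the projective Calabi--Yau manifold $X_1$ \cite{Bo}: a finite étale cover of $X_1$ splits as a product of a complex torus with simply connected Calabi--Yau and hyperkähler factors, and the global $T$-action, being generated by parallel holomorphic vector fields, is accounted for entirely by the torus factor. Projectivity is essential precisely here: the torus factor is an abelian variety, so Poincaré complete reducibility splits off the subtorus corresponding to $T$ up to isogeny, and isogenies are finite étale maps; arranging these covers to be pulled back from the base trivializes $P_1$ over a finite unramified cover $\tilde{Y}$. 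For merely Kähler $X$ this step genuinely fails, since a general complex torus is not isogenous to a product, which is exactly why Theorem \ref{main} must be weakened to Theorem \ref{main2} in the Kähler category. Finally, once $X\times_Y\tilde{Y}\cong\tilde{Y}\times F$ with $\tilde{Y}\to Y$ finite unramified, the projection $X\times_Y\tilde{Y}\to X$ is étale, so $K_{\tilde{Y}\times F}$ is trivial; since $K_F\cong\mathcal{O}_F$ and $\mathrm{pr}_{1*}\mathcal{O}=\mathcal{O}$, this forces $K_{\tilde{Y}}\cong\mathcal{O}_{\tilde{Y}}$, giving the conclusion of Theorem \ref{main}.
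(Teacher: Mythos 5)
Your skeleton (Fischer--Grauert, then structure-group reduction, then the canonical-bundle descent at the end) is sound, and two of its three stages are correct: the Fischer--Grauert step and the adjunction argument for $K_F\cong\mathcal{O}_F$ are exactly how the paper proceeds, and your reduction of the structure group --- monodromy-invariance of the restricted K\"ahler class $\kappa$, finiteness of $\mathrm{Aut}(F,\kappa)/\mathrm{Aut}^0(F)$ by Lieberman--Fujiki \cite{Li,Fu}, and a finite unramified cover killing the finite quotient --- is correct; it coincides with the first half of the paper's proof of Theorem \ref{bund} (used there for the K\"ahler case). The closing step ($X\times_Y\tilde{Y}\to X$ unramified, hence $K_{\tilde{Y}\times F}$ trivial, hence $K_{\tilde{Y}}$ trivial) is also the paper's argument. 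However, the paper's own proof of Lemma \ref{finite} disposes of the remaining step --- trivializing the bundle after a finite base change --- simply by citing \cite[Lemma 17]{KL}, and it is exactly this step that your proposal attempts to reprove and does not.

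The genuine gap is the sentence ``arranging these covers to be pulled back from the base trivializes $P_1$ over a finite unramified cover $\tilde{Y}$'': this is an assertion of the conclusion, not an argument. The Beauville--Bogomolov cover of $X_1$ and the cover coming from Poincar\'e reducibility are finite \'etale covers of the \emph{total space}; nothing in your sketch produces from them a finite cover $\tilde{Y}\to Y_1$ together with a bundle isomorphism $X_1\times_{Y_1}\tilde{Y}\cong\tilde{Y}\times F$, which is equivalent to showing that the class of the principal $T$-bundle $P_1$ in $H^1(Y_1,\mathcal{T})$ (with $\mathcal{T}$ the sheaf of germs of maps to $T$) is torsion. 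Remark \ref{rk1} shows this is false without projectivity, so the projective input must actually be made to bite, not merely be named. Your instinct is not unreasonable: when $F$ is itself a torus, so that $\dim T_1=\dim F$, the idea can be completed --- the Poincar\'e complement $T_2\times Z$ then maps to $Y_1$ properly and submersively with finite fibers, hence is itself the desired cover $\tilde{Y}$, and one checks the deck group of $T_1\times\tilde{Y}\to X_1\times_{Y_1}\tilde{Y}$ acts by constant translations, giving honest triviality. But in the general case $0<\dim T<\dim F$ (e.g. $F$ a bielliptic surface, $T$ an elliptic curve), $T_2\times Z\to Y_1$ has positive-dimensional fibers and is again a fiber bundle of the very type you are trying to trivialize; your construction then yields no trivialization, at best the start of an induction you have not formulated. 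So the proof is incomplete precisely where the paper leans on \cite[Lemma 17]{KL}: either quote that lemma, or supply the missing descent/torsion argument.
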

\begin{proof}
Indeed, in this case we can apply the Fischer-Grauert theorem \cite{FG} and conclude that $f$ is a holomorphic fiber bundle,
and then we can apply \cite[Lemma 17]{KL} to get a finite unramified covering $\ti{Y}\to Y$
such that the pullback family $X\times_Y \ti{Y}\to \ti{Y}$ is trivial. Now $X\times_Y \ti{Y}\to X$ is also a finite unramified covering, hence the canonical bundle of
$X\times_Y \ti{Y}$ is holomorphically trivial. But $X\times_Y \ti{Y}\cong \ti{Y}\times F$, and so the canonical bundles of $\ti{Y}$ and $F$ are both trivial.
\end{proof}

In this proof we used \cite[Lemma 17]{KL} which uses crucially the assumption that $X$ is projective. In fact, \cite[Lemma 17]{KL} is false if $X$ is only compact K\"ahler, see Remark \ref{rk1}. However, as we will see in section \ref{sect3}, \cite[Lemma 17]{KL} is true in the K\"ahler case if either $b_1(F)=0$ or $F$ is a torus and $b_1(Y)=0$.

Thus, to prove Theorem \ref{main}, we are reduced to showing that all the fibers are biholomorphic. In the case when $\dim Y=\dim X-1$ the
 proof is very simple: in this case the fibers $X_y$ are elliptic curves,
which are classified by their $j$-invariant, and mapping $y$ to the $j$-invariant of $X_y$ gives a holomorphic function on $Y$ which must be constant since $Y$ is compact.
We follow a similar approach in higher dimensions, which is necessarily more complicated.

\section{Proof of theorem \ref{main}}\label{sect2}
The main result of this section is the following:
\begin{theorem}\label{triv}
If $f:X\to Y$ is a holomorphic submersion between compact K\"ahler manifolds with $K_X\cong\mathcal{O}_X$, then
$f$ is a holomorphic fiber bundle with fiber $F$ a compact K\"ahler manifold with $K_F\cong\mathcal{O}_F$ and with base $Y$ Calabi-Yau.
\end{theorem}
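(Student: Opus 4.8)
The plan is to prove that all fibers $X_y$ are biholomorphic; once this is known, the Fischer--Grauert theorem (exactly as in the proof of Lemma~\ref{finite}) shows that $f$ is a holomorphic fiber bundle, and the triviality of $K_F$ and the vanishing of $c_1(Y)$ will fall out of the argument. I would begin with a cohomological identity on the base. Each fiber $X_y$ is Calabi--Yau of dimension $n-m$, so $h^0(X_y,K_{X_y})=1$ is independent of $y$ and, by Grauert's theorem, $\mathcal{L}:=f_*K_{X/Y}$ is a holomorphic line bundle on $Y$ which commutes with base change. Writing $K_X=f^*K_Y\otimes K_{X/Y}$ and using $K_X\cong\mathcal{O}_X$, the projection formula gives
\[
\mathcal{O}_Y=f_*\mathcal{O}_X=f_*K_X=K_Y\otimes f_*K_{X/Y}=K_Y\otimes\mathcal{L},
\]
where $f_*\mathcal{O}_X=\mathcal{O}_Y$ because $f$ is proper with connected fibers. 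Hence $\mathcal{L}\cong K_Y^{-1}$, which is the mechanism by which positivity of the Hodge bundle will be converted into positivity of $-K_Y$.

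Next I would endow $\mathcal{L}=f_*K_{X/Y}$ with its $L^2$ (Hodge) metric: a local holomorphic frame is a fiberwise holomorphic $(n-m)$-form $\Omega$, with $\|\Omega\|^2(y)=(\mn)^{(n-m)^2}\int_{X_y}\Omega\wedge\ov{\Omega}$. The standard curvature computation for a family of Calabi--Yau manifolds identifies the Chern curvature of $(\mathcal{L},\|\cdot\|^2)$ with the Weil--Petersson form $\omega_{WP}\geq0$, which vanishes exactly where the period map has vanishing differential; the sign is already visible for elliptic curves, where $\|dz\|^2$ is a multiple of $\mathrm{Im}\,\tau$ and $-\ddbar\log\mathrm{Im}\,\tau$ is the Poincaré metric. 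Transporting this through $\mathcal{L}\cong K_Y^{-1}$, the metric $\|\cdot\|^2$ corresponds to a smooth volume form $V$ on $Y$ with $-\ddbar\log V=\omega_{WP}$. Fixing a K\"ahler class on $Y$ and solving the complex Monge--Amp\`ere equation of Yau~\cite{Y} for a K\"ahler metric $\omega_Y$ in that class with $\omega_Y^m=cV$, I obtain $\Ric(\omega_Y)=-\ddbar\log(\omega_Y^m)=-\ddbar\log V=\omega_{WP}\geq0$. Thus $Y$ carries a K\"ahler metric of nonnegative Ricci curvature.

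I would then pass to the universal cover $\pi:\ti{Y}\to Y$, which with the pulled-back metric is complete K\"ahler with $\Ric\geq0$. The polarized variation of Hodge structure on $R^{n-m}f_*\mathbb{C}$ (polarized by the fiberwise restriction of a fixed K\"ahler class of $X$, which is a flat section) has its monodromy trivialized on $\ti{Y}$, giving a holomorphic, horizontal period map $\Phi:\ti{Y}\to\mathcal{D}$ to the classifying space of polarized Hodge structures. Since $\mathcal{D}$ carries an invariant Hermitian metric whose holomorphic sectional curvature is bounded above by a negative constant along horizontal directions, and $d\Phi$ lands in horizontal tangent spaces by Griffiths transversality, the version of Yau's Schwarz Lemma~\cite{Yau} for a complete K\"ahler source with $\Ric\geq0$ forces $\Phi^*\omega_{\mathcal{D}}\leq0$, hence $\Phi^*\omega_{\mathcal{D}}=0$ and $\Phi$ is constant. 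By the infinitesimal Torelli theorem for Calabi--Yau manifolds (contraction with the fiberwise holomorphic volume form identifies $H^1(X_y,T_{X_y})$ with $H^{n-m-1,1}(X_y)$), constancy of $\Phi$ forces the Kodaira--Spencer map to vanish identically; combined with Bogomolov--Tian--Todorov unobstructedness this makes the isomorphism type of the fibers locally constant, so over the connected base $Y$ all fibers are biholomorphic. Fischer--Grauert then gives that $f$ is a holomorphic fiber bundle with fiber $F$; adjunction together with the triviality of the normal bundle of a fiber yields $K_F\cong\mathcal{O}_F$, and since $\Phi$ constant forces $\omega_{WP}=0$, the metric $\omega_Y$ is Ricci-flat, so $c_1(Y)=0$ and $Y$ is Calabi--Yau.

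The step I expect to be the main obstacle is the Schwarz Lemma: the negative curvature of $\mathcal{D}$ holds only in horizontal directions, so I cannot simply quote Yau's lemma but must check that the only curvature of the target entering the Bochner estimate is the holomorphic sectional curvature along $d\Phi(T^{1,0}\ti{Y})$, which lies in the horizontal distribution, and that completeness together with the merely nonnegative (rather than strictly positive) Ricci lower bound still forces $\Phi^*\omega_{\mathcal{D}}=0$. A secondary point needing care is the precise sign and normalization in identifying the curvature of the $L^2$ metric with $\omega_{WP}$, on which the nonnegativity of $\Ric(\omega_Y)$ depends, and the passage from the identical vanishing of the Kodaira--Spencer map to the genuine biholomorphism of all fibers.
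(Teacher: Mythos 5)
Your strategy is the same as the paper's: produce a K\"ahler metric on $Y$ with $\Ric\geq 0$ whose Ricci form is $\omega_{WP}$, lift to the universal cover, feed the horizontal period map into a Schwarz-lemma argument, then use infinitesimal Torelli, local triviality, and Fischer--Grauert, with $[\omega_{WP}]=c_1(Y)$ giving the Calabi--Yau conclusion for $Y$. Your first step (the isomorphism $f_*K_{X/Y}\cong K_Y^{-1}$ via the projection formula, the $L^2$ metric whose curvature is $\omega_{WP}$, and Yau's theorem to solve $\omega_Y^m=cV$) is precisely the proof of \cite[Proposition 4.1]{To}, which the paper quotes, and it is correct; so is the period map setup, up to the minor point that the polarization should be taken on the primitive part of $H^{n-m}(X_y,\mathbb{C})$ cut out by the K\"ahler class, not on all of $R^{n-m}f_*\mathbb{C}$.

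The genuine gap is exactly the step you flag as ``the main obstacle,'' and your proposed way of checking it would not close it. In the Chern--Lu/Bochner computation for $u=\mathrm{tr}_{\ti{\omega}}(\Phi^*\omega_H)$, the target curvature does \emph{not} enter only through holomorphic sectional curvatures of single image vectors: what appears is the double sum $\sum_{i,k}R(\xi_i,\ov{\xi_i},\xi_k,\ov{\xi_k})$ over all pairs $\xi_i=\Phi_*(e_i)$, and an upper bound on holomorphic sectional curvature alone does not control such a sum of bisectional-type terms. The paper's Lemma \ref{schwarz} resolves this with two specific ingredients you do not identify: (i) Royden's inequality \cite[Lemma, p.552]{Ro}, which converts a holomorphic sectional curvature bound into a bound on the double sum, but whose proof requires the K\"ahler symmetries of the curvature tensor; and (ii) Lu's theorem \cite{Lu} that the restriction of the Hodge metric $\omega_H$ to horizontal slices is K\"ahler, i.e.\ $\de\omega_H(\xi_1,\ov{\xi_2},\xi_3)=0$ for horizontal $\xi_i$ --- this is indispensable because $\omega_H$ on $\mathcal{D}$ is only Hermitian, and without it neither the torsion terms in the Bochner formula nor the hypotheses of Royden's lemma can be dealt with. (There is a further wrinkle: one must slice along the image $\Phi(U)$, which may be singular, work on its regular part, and use that holomorphic sectional curvature decreases on submanifolds.) Separately, a smaller inaccuracy: Bogomolov--Tian--Todorov unobstructedness is not the tool that converts identical vanishing of the Kodaira--Spencer map into local triviality --- pointwise vanishing of $\rho$ does not rigidify a family if $h^1(X_y,\Theta_{X_y})$ can jump. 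What is needed, and what the paper uses, is that $h^1(X_y,\Theta_{X_y})=h^{n-m-1,1}(X_y)$ is a Hodge number, hence constant for K\"ahler fibers with trivial canonical bundle, after which Kodaira's theorem \cite[Theorem 4.6]{Ko} yields local triviality and then biholomorphism of all fibers over the connected base.
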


In particular, since all fibers of a holomorphic fiber bundles are biholomorphic, Theorem \ref{main} follows immediately from Theorem \ref{triv} and Lemma \ref{finite}.

\begin{proof}
Let $\omega_X$ be a K\"ahler metric on $X$, and note that since the canonical bundle $K_{X}$ is trivial, the fibers $X_y=f^{-1}(y)$ also have trivial canonical bundle.
Thanks to \cite[Proposition 4.1]{To} (see also \cite{ST}) there is a K\"ahler metric $\omega$ on $Y$ such that ${\rm Ric}(\omega)=\omega_{WP}\geqslant 0,$ where $\omega_{WP}$ is
a Weil-Petersson type semipositive definite form \cite{FS}. In fact more generally it is true that for $Y$ to admit a K\"ahler metric with nonnegative Ricci curvature, it is enough to assume that $X$ admits such a metric, thanks to a result of Berndtsson (\cite[Theorem 1.2]{Ber} applied with $L=-K_X$).

We now need to use real polarized Hodge structures. These are defined without any projectivity assumption, by a straightforward generalization of Griffiths' original definition in the projective case \cite{Gr2}. Here we follow the discussion in \cite[Section 8.1]{GM} (see also \cite[Definition 1.5]{De}).
Let $P\subset H^{n-m}(X_{y}, \mathbb{C})=:H$ be the primitive cohomology induced by the K\"ahler class $[\omega_X|_{X_y}]$. Let
$H_{\mathbb{R}}=P\cap H^{n-m}(X_{y}, \mathbb{R})$, $h^{p,q}=\dim_{\mathbb{C} } P\cap H^{q}(X_{y}, \Omega^{p}_{X_{y}})$ for $p+q=n-m$, and call
$Q$ the quadratic form on $H$ given by
$$Q(\phi, \psi)=(-1)^{\frac{(n-m)(n-m-1)}{2}} \int_{X_{y}}\phi \wedge \psi,$$
which is called a polarization (even though no integrality assumption is made).

Then the construction by Griffiths \cite[Section 8]{Gr3} gives a classifying space $\mathcal{D}$ for real polarized Hodge structure of type
  $\{ H_{\mathbb{R} }, h^{p,q}, Q \}$, and a well-defined holomorphic period map
  $\mathcal{P}:\ti{Y}\to \mathcal{D}$, where $\ti{Y}$ is the universal cover of $Y$. If $\pi:\ti{Y}\to Y$ is the universal covering map, and $y=\pi(z)$, then we have
  $\mathcal{P}(z)= \{P^{p,q}=P\cap H^{q}(X_{y}, \Omega^{p}_{X_{y}})\}$.
Furthermore, $\mathcal{P}$ is horizontal in the sense that $\mathcal{P}_{*} (T(\ti{Y})) \subset T_{h}(\mathcal{D}),$
where $T_{h}(\mathcal{D})$ is the horizontal subbundle of $T(\mathcal{D})\subset \oplus_{r>0} Hom(P^{p,q},P^{p-r,q+r})$ defined for example in Definition 25 of Chapter I in \cite{Gr}.

There is a Hermitian metric $\omega_{H}$ on $\mathcal{D}$ such that for any unit vector $\xi \in T_{h}(\mathcal{D})$,  its holomorphic sectional curvature satisfies
 \begin{equation}\label{curv}
 R(\xi,\ov{\xi},\xi,\ov{\xi}) \leqslant -A<0,
 \end{equation}
  for a constant $A>0$ (cf. \cite[Theorem 9.1]{GS}, \cite[Theorem 5.16]{De} or Chapter II in \cite{Gr}). In fact, the restriction of the metric $\omega_H$ to any horizontal slice is K\"ahler thanks to \cite{Lu}, i.e. $\de\omega_H(\xi_1,\ov{\xi_2},\xi_3)=0$ for all $\xi_1,\xi_2,\xi_3\in T_{h}(\mathcal{D})$.

If we let $\ti{\omega}=\pi^*\omega$, then $\ti{\omega}$ is a complete K\"ahler metric on $\ti{Y}$ with nonnegative Ricci curvature. We then apply the Schwarz Lemma \ref{schwarz} below to conclude that $\mathcal{P}:\ti{Y}\to\mathcal{D}$ is constant.

 By \cite[Theorem 1.27]{Gr2}, \cite[Theorem 5.3.4]{CMP} or Chapter III in \cite{Gr},  $\mathcal{P}_{*,z}: T_{z} \ti{Y} \to T_{h}(\mathcal{D})_{\mathcal{P}(z)}$
 is a composition of the Kodaira-Spencer map $\rho:T_{z} \ti{Y} \to H^{1}(X_{y}, \Theta_{X_{y}})$, ($y=\pi(z)$),
 and the cup product map $w: H^{1}(X_{y}, \Theta_{X_{y}}) \to T_{h}( \mathcal{D})_{\mathcal{P}(z)}$,
 where $\Theta_{X_{y}}$ is the sheaf of holomorphic vector fields on $X_{y}$.
 Since $X_{y}$ has trivial canonical bundle,  $w$ is injective (cf. Example 5.6.2 in \cite{CMP} or \cite[Proposition 3.6]{Gr2}, \cite{Tj}, \cite{BG}).
 This result is also known as the infinitesimal Torelli theorem for Calabi-Yau manifolds.
 Thus the Kodaira-Spencer map $\rho$ of the pullback family over $\ti{Y}$ is trivial. This implies that the Kodaira-Spencer map of the family over $Y$ is trivial too.
Now the fibers $X_y$ have trivial canonical bundle, and hence $H^1(X_y,\Theta_{X_{y}})\cong H^{1,n-1}(X_y)$ has dimension independent of $y\in Y$.
Therefore thanks to \cite[Theorem 4.6]{Ko} the complex structure of $X_{y}$ does not change when $y$ varies, i.e. all fibers $X_y$ are biholomorphic to each other.
The Fischer-Grauert theorem \cite{FG} implies that $f$ is a holomorphic fiber bundle with fiber $F\cong X_y$ with trivial canonical
bundle. Since all fibers $X_y$ are biholomorphic to each other, the Weil-Petersson form $\omega_{WP}$ of the fibration $f$ vanishes identically. Its cohomology class is $[\omega_{WP}]=c_1(Y)$, hence $Y$ is Calabi-Yau.
\end{proof}

\begin{rk}
The proof we just finished in fact shows the following result: if $f:X\to Y$ is a holomorphic submersion between compact K\"ahler manifolds with
fibers $X_y$ with trivial canonical bundle and base $Y$ which admits a K\"ahler metric with nonnegative Ricci curvature then
$f$ is a holomorphic fiber bundle.
\end{rk}
In the proof of Theorem \ref{triv}, we used the following improved version of Yau's Schwarz Lemma \cite{Yau}, which incorporates an observation of Royden \cite{Ro} (see also \cite{CCL, To1}).
\begin{lemma}[Schwarz Lemma]\label{schwarz} Let $(X,\omega_X)$ be a complete K\"ahler manifold with nonnegative Ricci curvature and $(Y,\omega_Y)$ be  a Hermitian manifold. Let
$f:X\to Y$ be a holomorphic map, and assume that the holomorphic sectional curvature of $\omega_Y$ satisfies
\begin{equation}\label{hypo}
R_Y(\xi,\ov{\xi},\xi,\ov{\xi})\leq -A<0,
\end{equation}
for a constant $A>0$ and for all unit vectors $\xi$ in the image of $f_*:T^{1,0}X\to T^{1,0}Y$, and that $\de\omega_Y(\xi_1,\ov{\xi_2},\xi_3)=0$ for all $\xi_1,\xi_2,\xi_3$ in the image of $f_*$. Then $f$ is constant.
\end{lemma}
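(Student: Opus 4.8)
The plan is to show that the energy density $u := \tr{\omega_X}{f^*\omega_Y} \geq 0$ vanishes identically, which forces $f_*\equiv 0$ and hence $f$ constant. The heart of the argument is a Bochner-type (Chern--Lu) computation of the complex Laplacian $\Delta\log u$ with respect to $\omega_X$, carried out at a point in normal coordinates for $\omega_X$. Writing $g,h$ for the local expressions of $\omega_X,\omega_Y$, one has $u=g^{i\ov j}h_{\alpha\ov\beta}\de_i f^\alpha\,\ov{\de_j f^\beta}$, and I would differentiate twice, using the K\"ahler identity on $X$ and the commutation relations for the connection on the pulled-back bundle $f^*T^{1,0}Y$.

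First I would record the general form of this calculation on the open set $\{u>0\}$. The standard computation produces three kinds of terms: a manifestly nonnegative contribution coming from the covariant derivative of $\de f$ (which also absorbs the $|\nabla u|^2/u^2$ term from $\Delta\log u=\Delta u/u-|\nabla u|^2/u^2$); the term $\Ric(\omega_X)$ contracted against $\de f$, which is $\geq 0$ by hypothesis; and the curvature of $\omega_Y$ pulled back along $f$, i.e. $R_Y$ contracted four times against $\de f$. Since $Y$ is only Hermitian, the naive K\"ahler computation also generates torsion terms built from $\de\omega_Y$ and its conjugate. The key observation is that $f^*\omega_Y$ and all of its derivatives only involve $\omega_Y$ and $\de\omega_Y$ evaluated on vectors in the image of $f_*$; hence every torsion term appears contracted solely against such vectors. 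By the hypothesis $\de\omega_Y(\xi_1,\ov{\xi_2},\xi_3)=0$ for $\xi_i$ in the image of $f_*$, all of these contributions vanish, and the computation reduces to the K\"ahler case.

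Next I would apply Royden's trick. The target-curvature term a priori involves the full curvature tensor of $\omega_Y$, not just its holomorphic sectional curvature; Royden's polarization/symmetrization argument bounds this contraction from above by the holomorphic sectional curvature along the image directions, which by \eqref{hypo} is $\leq -A$. This produces a term $\leq -c_m A\,u^2$ in $\Delta u$, with $c_m>0$ a constant depending only on $m=\dim_{\mathbb{C}}X$ and the rank of $\de f$. Combining the three pieces --- discarding the two nonnegative ones and dividing by $u$ --- yields on $\{u>0\}$ the differential inequality $\Delta\log u\geq c_m A\,u$.

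Finally I would run Yau's maximum principle for the complete manifold $(X,\omega_X)$. Since $\Ric(\omega_X)\geq 0$, the Laplacian comparison theorem controls the Hessian of the distance function, so the cutoff argument of \cite{Yau} applies: a strictly positive lower bound $c_m A\,u$ for $\Delta\log u$ is incompatible with $u$ attaining a positive supremum in the generalized sense, forcing $\sup_X u=0$ and hence $u\equiv 0$. I expect the main obstacle to be the bookkeeping in the Bochner computation over the Hermitian target: one must verify carefully that the extra torsion terms genuinely appear only in image directions, so that the hypothesis on $\de\omega_Y$ eliminates them, and that commuting covariant derivatives does not reintroduce uncontrolled torsion.
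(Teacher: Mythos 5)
Your overall skeleton (the function $u=\mathrm{tr}_{g_X}(f^*g_Y)$, a Chern--Lu inequality, Royden's refinement, Yau's generalized maximum principle) matches the paper's, but there is a genuine gap at the step you yourself flag as "the main obstacle": the claim that the hypothesis on $\de\omega_Y$ makes the Bochner computation "reduce to the K\"ahler case." The hypothesis only kills the \emph{antisymmetrized} combination $\de_\gamma h_{\alpha\ov\beta}-\de_\alpha h_{\gamma\ov\beta}$ contracted with image vectors, not the individual derivatives $\de_\gamma h_{\alpha\ov\beta}$ that appear when you differentiate $f^*\omega_Y$. If instead you run the computation with the Chern connection of $\omega_Y$ (the cleanest route for a holomorphic map into a Hermitian target), then no explicit torsion terms appear at all and the inequality reads $\Delta u\geq -R^{\mathrm{Chern}}_Y(f_*e_i,\ov{f_*e_i},f_*e_k,\ov{f_*e_k})$; but now the problem reappears one step later: Royden's lemma \cite[Lemma, p.552]{Ro} is a statement about \emph{K\"ahler} curvature tensors, and its polarization argument uses the symmetry $R_{\gamma\ov{\delta}\alpha\ov{\beta}}=R_{\alpha\ov{\delta}\gamma\ov{\beta}}$. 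For a non-K\"ahler Chern curvature this symmetry fails by a term involving the \emph{covariant derivative} of the torsion (plus torsion contracted against non-image indices coming from $h^{\mu\ov\nu}\de_{\ov\delta}h_{\mu\ov\beta}$), and vanishing of the torsion on image vectors does not imply vanishing of its derivatives there: differentiating the constraint along image directions produces terms where the torsion hits derivatives of image vector fields, which need not stay in the image since the image distribution is not parallel. So neither version of your "torsion bookkeeping" closes, and the estimate $\Delta\log u\geq c_mAu$ is not established.

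The paper circumvents exactly this difficulty with an idea absent from your proposal: work on the \emph{image} rather than in the ambient manifold. Near any point, $V=f(U)$ is an irreducible analytic subvariety of dimension $k=\mathrm{rank}_{\mathrm{gen}}f_*$; on its regular part the tangent spaces are spanned by image vectors, so the hypothesis $\de\omega_Y(\xi_1,\ov{\xi_2},\xi_3)=0$ says precisely that $\omega_Y|_{V_{reg}}$ is a genuine \emph{K\"ahler} metric, whose holomorphic sectional curvature is $\leq -A$ by \eqref{hypo} together with curvature decrease in complex submanifolds. Since $f$ locally maps into $V$, one has $f^*\omega_Y=f^*(\omega_Y|_{V_{reg}})$ there, so the standard K\"ahler-target Chern--Lu computation and Royden's lemma apply legitimately to $f:f^{-1}(V_{reg})\to V_{reg}$, giving $\Delta_{g_X}u\geq Au^2$ on $f^{-1}(V_{reg})$; this extends to all of $X$ because $f^{-1}(V_{sing})$ is a proper analytic subset and both sides are continuous. (Note also that the paper gets the pointwise inequality $\Delta u\geq Au^2$ everywhere, not just on $\{u>0\}$, which is what lets Yau's maximum principle be applied cleanly to $(u+1)^{-1/2}$; your final step is otherwise essentially the same as the paper's.) To repair your proof you would need either this restriction-to-the-image device or a genuinely new argument replacing Royden's lemma for non-K\"ahler Chern curvature.
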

\begin{proof}
Let $u=\mathrm{tr}_{g_X}(f^*g_Y)$, which is a smooth nonnegative function on $X$, and vanishes identically if and only if $f$ is constant. Assume that $f$ is not constant, so $\sup_X u>0$ (it could also equal $+\infty$), and let $k>0$ be the maximum rank of $f_*$ over $X$, which also equals the generic rank of $f_*$. Our first goal is to show that 
\begin{equation}\label{key}
\Delta_{g_X}u\geq Au^2,
\end{equation}
holds on all of $X$, where $\Delta_{g_X}=g^{i\ov{j}}\de_i\de_{\ov{j}}$ is the (complex) Laplacian of $g_X$ acting on functions.  For any given $x\in X$, we can find an open neighborhood $U\ni x$ such that $V=f(U)$ is an irreducible analytic subvariety of $Y$, of dimension $k$. The variety $V$ may not be smooth at $y=f(x)$, but $f^{-1}(V_{sing})$ is Zariski closed in $U$. We now take any point $x'\in f^{-1}(V_{reg})$, let $y'=f(x')$ and choose a local unitary coframe $\{\theta^1,\dots,\theta^n\}$ for $g_X$ near $x'$ and a local unitary coframe $\{\ti{\theta}^1,\dots,\ti{\theta}^k\}$ for $g_Y|_{V_{reg}}$ near $y'$.
For any $0\leq\alpha\leq k$ write
$$f^*\ti{\theta}^\alpha=\sum_i f^\alpha_i \theta^i,$$
so $\{f^\alpha_i\}$ are the components of $f_*$ in the chosen coframes. Therefore we have $u(x')=\sum_{\alpha,i}|f^\alpha_i|^2$. Then a direct calculation (see e.g. \cite[(5.9)]{To1}, noting that the Laplacian used there is twice the complex Laplacian) shows that at $x'$ we have
\[\begin{split}
\Delta_{g_X}u&\geq \sum_{\alpha,i,j} \Ric(g_X)_{i\ov{j}}\ov{f^\alpha_i}f^\alpha_j-\sum_{\alpha,\beta,\gamma,\delta,i,k}(R_Y)_{\alpha\ov{\beta}\gamma\ov{\delta}}
f^\alpha_i\ov{f^\beta_i}f^\gamma_k\ov{f^\delta_k}\\
&\geq -\sum_{i,k}R_Y(\xi_i,\ov{\xi_i},\xi_k,\ov{\xi_k}),
\end{split}\]
using the assumption that $\Ric(g_X)\geq 0$, where $\xi_i=f_*(e_i)$ and $\{e_1,\dots,e_n\}$ is the unitary frame for $g$ dual to the coframe $\{\theta^i\}$, and $R_Y$ denotes the curvature tensor of the metric $g_Y|_{V_{reg}}$. By assumption, the metric $g_Y|_{V_{reg}}$ is K\"ahler, and has holomorphic sectional curvature bounded above by $-A$ because of assumption \eqref{hypo} and because holomorphic sectional curvature decreases in submanifolds.
By changing the unitary frames at $x'$ and $y'$ if necessary, we may assume that the vectors $\{\xi_i\}$ are pairwise orthogonal.
Then an inequality of Royden \cite[Lemma, p.552]{Ro} gives that
\[\begin{split}
\sum_{i,k}R_Y(\xi_i,\ov{\xi_i},\xi_k,\ov{\xi_k})&\leq -A \left(\sum_i \|\xi_i\|_{g_Y}^2\right)^2
=-A \left(\sum_{\alpha,i}|f^\alpha_i|^2 \right)^2\\
&=-Au^2.
\end{split}\]
This shows that \eqref{key} holds on $f^{-1}(V_{reg})$, and since this is Zariski open in $U$ and both sides of \eqref{key} are smooth functions, this proves that \eqref{key} holds everywhere.

We now conclude the proof exactly as in \cite{Yau, To1}. The generalized maximum principle of Yau (see e.g. \cite[Proposition 4.1]{To1}) shows that if $v$ is a smooth function on $X$ which is bounded below, then given any $\ve>0$ there is a point $x_\ve\in X$ with $\Delta_{g_X}v(x_\ve)\geq-\ve, \|du\|_{g_X}(x_\ve)\leq\ve$ and $\liminf_{\ve\to 0} v(x_\ve)=\inf_X v$. We apply this to $v=(u+1)^{-1/2}$, so that $\limsup_{\ve\to 0} u(x_\ve)=\sup_X u>0$, and we get
\[\begin{split}
-\ve&\leq \Delta_{g_X}v(x_\ve)=-\frac{1}{2}(u(x_\ve)+1)^{-3/2}\Delta_{g_X}u(x_\ve)+\frac{3}{4}(u(x_\ve)+1)^{-5/2}\|du\|^2_{g_X}(x_\ve)\\
&\leq -\frac{A}{2}(u(x_\ve)+1)^{-3/2}u(x_\ve)^2+\frac{3\ve^2}{4},
\end{split}\]
which holds for all $\ve>0$ such that $u(x_\ve)>0$, so that \eqref{key} holds at $x_\ve$. Taking $\ve$ arbitrarily close to $0$, we conclude that $\sup_X u=0$, which is a contradiction.
\end{proof}

\begin{rk}\label{rkh}
Andreas H\"oring has communicated to us an alternative line of proof of Theorem \ref{triv} in the projective case, which uses rather different ingredients.
Here is a brief sketch of the argument.
If the base $Y$ is not uniruled, then $K_Y$ is pseudoeffective thanks to \cite[Corollary 0.3]{BDPP}, and one can conclude
using the canonical bundle formula \cite{FM} and arguing along the lines of \cite[Theorem 4.8]{A}.

If $Y$ is uniruled, take $\mathbb{P}^1 \subset Y$ a free rational curve (i.e. with globally generated normal
bundle). The pullback family $X' = (X \times_Y \mathbb{P}^1)\to\mathbb{P}^1$ is such that $X'$ is smooth, with nonnegative Kodaira dimension (by adjunction), and
$X'\to\mathbb{P}^1$ is a submersion. This contradicts \cite[Theorem 0.2]{VZ}.
\end{rk}

\section{The K\"ahler case}\label{sect3}
In this section we will prove the following result, which is a substitute for Lemma \ref{finite} with no projectivity assumption:
\begin{theorem}\label{bund}
Let $X,Y$ be compact K\"ahler manifolds and $f:X\to Y$ a holomorphic fiber bundle with base $Y$ and fiber $F$ Calabi-Yau manifolds. If either $b_1(F)=0$ or $F$ is a torus and $b_1(Y)=0$, then there is a finite unramified covering $\ti{Y}\to Y$ such that the pullback bundle to $\ti{Y}$ is holomorphically trivial.
\end{theorem}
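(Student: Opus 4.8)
The plan is to prove Theorem~\ref{bund} by analyzing the structure group of the holomorphic fiber bundle $f:X\to Y$ and showing that, after passing to a finite unramified cover of the base, it can be reduced to the identity. Since all fibers are biholomorphic to a fixed Calabi-Yau manifold $F$, the bundle is classified by a representation of $\pi_1(Y)$ into the group $\mathrm{Aut}(F)$ of biholomorphisms of $F$; indeed, a holomorphic fiber bundle with compact fiber whose structure group is the whole automorphism group is determined by the monodromy of the associated flat principal bundle (here the relevant object is the representation $\rho:\pi_1(Y)\to \mathrm{Aut}(F)$ recording how $F$ is glued as one traverses loops in $Y$). The bundle becomes trivial on a cover $\ti Y\to Y$ precisely when the image of $\pi_1(\ti Y)$ under $\rho$ lies in the identity component of the relevant group, and in fact we want this image to be trivial. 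So the whole problem reduces to understanding $\mathrm{Aut}(F)$ and the image $\rho(\pi_1(Y))$.

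The key input is that for a Calabi-Yau manifold $F$, the automorphism group $\mathrm{Aut}(F)$ is very restricted. Its identity component $\mathrm{Aut}_0(F)$ is a complex Lie group whose Lie algebra is $H^0(F,\Theta_F)$, the space of holomorphic vector fields. When $b_1(F)=0$ one has $H^0(F,\Theta_F)\cong H^{n-1,0}(F)^*$ or more directly, since $F$ is Calabi-Yau with $K_F\cong\mathcal{O}_F$, holomorphic vector fields are dual to holomorphic $(n{-}1)$-forms; the vanishing $b_1(F)=0$ forces $h^{0}(F,\Theta_F)=0$, so $\mathrm{Aut}_0(F)$ is trivial and $\mathrm{Aut}(F)$ is discrete (in fact finite, by a theorem on automorphisms of Calabi-Yau manifolds). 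In the torus case $F=\mathbb{C}^k/\Lambda$, the automorphism group is an extension of a discrete group by the translations $\mathrm{Aut}_0(F)=F$ itself, and one uses $b_1(Y)=0$ to kill the translational part of the monodromy. The plan is thus:
\begin{enumerate}
\item[(i)] Reduce the bundle to its monodromy representation $\rho:\pi_1(Y)\to\mathrm{Aut}(F)$, using that $f$ is a holomorphic fiber bundle with structure group $\mathrm{Aut}(F)$ and that $Y$ admits a flat description of $X$ as $\ti{Y}\times_{\pi_1(Y)} F$.
\item[(ii)] In the case $b_1(F)=0$: show $\mathrm{Aut}(F)$ is finite, so $\ker(\rho)$ is a finite-index subgroup of $\pi_1(Y)$; the corresponding finite unramified cover $\ti Y\to Y$ trivializes the bundle.
\item[(iii)] In the torus case: analyze the extension $1\to F\to\mathrm{Aut}(F)\to \Gamma\to 1$ with $\Gamma$ discrete, pass to a finite cover killing the $\Gamma$-part, and then use $b_1(Y)=0$ to show the remaining translational/linear monodromy is trivial.
\end{enumerate}

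For step (ii), the cleanest route is to invoke that a holomorphic fiber bundle with discrete (equivalently finite) structure group is a \emph{flat} bundle in the holomorphic category, i.e.\ it is the quotient $(\ti Y\times F)/\pi_1(Y)$ where $\pi_1(Y)$ acts diagonally through $\rho$ on the $F$ factor and by deck transformations on $\ti Y$; pulling back along the cover corresponding to $\ker\rho$ then yields the product $\ti Y\times F$. The only subtlety is to confirm that the cover $\ti Y$ thus produced is compact (finite index) and that the triviality is holomorphic and not merely topological, which is immediate from the flat holomorphic description. For step (iii), the torus case is genuinely more delicate: here $\mathrm{Aut}_0(F)=F$ is positive-dimensional, so one cannot simply pass to the kernel of a discrete representation. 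Instead I would use the structure of $\mathrm{Aut}(F)$ for a complex torus and the hypothesis $b_1(Y)=0$, which constrains $\mathrm{Hom}(\pi_1(Y),F)$: translational monodromy corresponds to a class controlled by $H^1(Y,\cdot)$, and $b_1(Y)=0$ forces this to vanish after a finite cover, leaving only the finite linear part which is handled as in step~(ii).

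The main obstacle I anticipate is step (iii), the torus fiber case. The difficulty is precisely that $\mathrm{Aut}(F)$ has a positive-dimensional identity component, so the naive ``pass to the kernel of the monodromy'' argument fails; one must separately control the discrete part (the induced action on $H^1(F,\mathbb{Z})$, i.e.\ a representation into $GL(2k,\mathbb{Z})\cap Sp$, whose image need not be finite a priori) and the continuous translational part, and only the hypothesis $b_1(Y)=0$ makes the latter tractable. The reason Lemma~\ref{finite} (via \cite[Lemma 17]{KL}) used projectivity is exactly to sidestep this; replacing that input in the K\"ahler setting requires the cohomological hypotheses $b_1(F)=0$ or $b_1(Y)=0$ stated in the theorem, and verifying that these hypotheses are exactly what is needed to run the argument is where the real work lies. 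I would expect that the counterexample alluded to in Remark~\ref{rk1} (a non-projective K\"ahler bundle that is \emph{not} trivialized by any finite cover) is precisely a torus bundle violating both hypotheses, which both motivates and sharpens the two cases.
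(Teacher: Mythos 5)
Your proposal contains two genuine gaps, one in each case of the theorem.

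In case (ii), your argument hinges on the claim that $\mathrm{Aut}(F)$ is finite for a Calabi-Yau manifold $F$ with $b_1(F)=0$; this is false. While $b_1(F)=0$ does force $\mathrm{Aut}^0(F)$ to be trivial (holomorphic vector fields on a Ricci-flat compact K\"ahler manifold are parallel, so $h^0(F,\Theta_F)=b_1(F)/2$), the full discrete group $\mathrm{Aut}(F)$ can be infinite: there are K3 surfaces, and Calabi-Yau threefolds, with infinite automorphism groups. Consequently your flat description $X\cong\ti{Y}\times_{\pi_1(Y)}F$ (which is legitimate once the structure group is discrete) only produces the cover corresponding to $\ker\rho$, and without finiteness of the monodromy image this cover need not be finite, so the theorem does not follow. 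The missing idea, which is how the paper proceeds, is to observe that the transition functions preserve the K\"ahler class $[\omega|_F]$ obtained by restricting a global K\"ahler class of $X$, so the structure group reduces to $\mathrm{Aut}(F,[\omega|_F])$, and by Lieberman and Fujiki the quotient $\mathrm{Aut}(F,[\omega|_F])/\mathrm{Aut}^0(F)$ \emph{is} finite. That finiteness statement (not finiteness of $\mathrm{Aut}(F)$) is what lets one pass to a finite cover; it is also exactly what you would need in case (iii) to ``kill the $\Gamma$-part,'' a step you leave unjustified since an a priori monodromy in $GL(2k,\mathbb{Z})$ can have infinite image, as you yourself note.

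In case (iii) the plan misses the actual obstruction. After reducing the structure group to $\mathrm{Aut}^0(F)\cong F$, the bundle is a principal $F$-bundle, classified not by a monodromy representation but by $H^1(Y,\mathcal{F})$, where $\mathcal{F}$ is the sheaf of germs of holomorphic maps to $F$; the exact sequence coming from $0\to\Lambda\to\mathbb{C}^n\to F\to 0$ splits the obstruction into a piece in $H^1(Y,\mathcal{O}_Y)^n$ and the Chern class in $H^2(Y,\Lambda)$. Your hypothesis $b_1(Y)=0$ only kills the first piece (after using the Beauville--Bogomolov decomposition, valid since $Y$ is Calabi-Yau, to pass to a finite cover with $Y$ simply connected). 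The Chern class does not vanish for free even over a simply connected base: there are nontrivial principal elliptic curve bundles over $\mathbb{P}^1$ (Hopf surfaces fiber this way), whose total spaces are non-K\"ahler. Killing the Chern class requires an input you never invoke: Blanchard's theorem, which says that K\"ahlerness of the total space $X$ forces the Chern class to be torsion, combined with the torsion-freeness of $H^2(Y,\Lambda)$ when $H_1(Y,\mathbb{Z})=0$. So the ``real work'' you correctly anticipate in the torus case is not about translational monodromy at all; it is the Chern class computation, and it uses the K\"ahler hypothesis on $X$ in an essential way.
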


Theorem \ref{main2} follows immediately from this result together with Theorem \ref{triv}. In fact, in Theorem \ref{bund} we do not even assume that $X$ is Calabi-Yau.

\begin{rk}\label{rk1}
Theorem \ref{bund}, and hence Lemma \ref{finite} and Theorems \ref{main1} and \ref{main}, are false if $X$ is not projective and $b_1(F)>0$ and $b_1(Y)>0$. Indeed, let $\Lambda$ be the lattice in $\mathbb{C}^2$
spanned by the vectors $(1,0), (0,1), (i,0), (i\sqrt{2},i)$, and let $X=\mathbb{C}^2/\Lambda$. Then the torus
$X$ has algebraic dimension $1$ (see e.g. \cite[p. 50]{EF}), and its algebraic reduction map is $f:X\to Y$ with $Y$ an elliptic curve.
Then $f$ is an elliptic bundle, which is not trivial (otherwise $X\cong Y\times F$ would have algebraic dimension $2$).
Since the algebraic dimension is invariant under finite unramified coverings, the bundle does not become trivial even after pulling back via
a finite unramified covering $\ti{Y}\to Y$.
\end{rk}

\begin{proof}[Proof of Theorem \ref{bund}]
Fix $\omega$ a K\"ahler metric on $X$, let $\mathrm{Aut}^0(F)$ be the connected component of the identity of the automorphism group of $F$, and
let $\mathrm{Aut}(F,[\omega|_F])$ be the group of automorphisms of $F$ which preserve the K\"ahler class $[\omega|_F]$. The structure group of the bundle $f:X\to Y$ reduces
to $\mathrm{Aut}(F,[\omega|_F])$. Thanks to \cite[Proposition 2.2]{Li} or \cite[Theorem 4.8]{Fu} the quotient $\mathrm{Aut}(F,[\omega|_F])/\mathrm{Aut}^0(F)$ is finite.
Taking a finite unramified covering of $Y$ we can then assume that the structure group reduces to $\mathrm{Aut}^0(F)$.

If we are in the case when $b_1(F)=0$, then we have that $\mathrm{Aut}^0(F)=\{1\}$
and so the structure group of the bundle is trivial, and the bundle is holomorphically trivial.

We now treat the case when $F$ is a torus and $b_1(Y)=0$. Since $Y$ is Calabi-Yau, the Beauville-Bogomolov-Calabi decomposition theorem \cite{Be, Bo, Cal} shows that $\pi_1(Y)$ is finite, so up to a
finite unramified covering of $Y$ we can assume that $Y$ is simply connected.
Since $F$ is a torus, we have that $\mathrm{Aut}^0(F)\cong F$ and so $f:X\to Y$ is a principal torus bundle.
If we call $\mathcal{F}$ the sheaf over $Y$ of germs of local holomorphic maps from $Y$ to $F$, then principal bundles over $Y$ with fiber $F$ are classified by
$H^1(Y,\mathcal{F})$, and we will write $\zeta\in H^1(Y,\mathcal{F})$ for the class of our bundle. If we write $F=\mathbb{C}^n/\Lambda$ with $\Lambda\cong \mathbb{Z}^{2n}$, then the short exact sequence
$$0\to\Lambda\to\mathbb{C}^n\to F\to 0,$$ gives a long exact sequence
$$H^1(Y,\Lambda)\to H^1(Y,\mathcal{O})^n\to H^1(Y,\mathcal{F})\overset{c}{\to}H^2(Y,\Lambda)\to H^2(Y,\mathcal{O})^n,$$
where we call $c(\zeta)$ the Chern class of the bundle. $Y$ is simply connected implies $H^1(Y,\mathcal{O})^n=0$. On the other hand, the
fact that $X$ is K\"ahler implies that $c(\zeta)$ is a torsion class, thanks to a theorem of Blanchard \cite{Bl} (see also \cite[Theorem 1.7]{Ho}). But $H^2(Y,\Lambda)$ is torsion-free since $H_1(Y,\mathbb{Z})=0$, and hence $c(\zeta)=0$, and the principal bundle is holomorphically trivial.
\end{proof}

\begin{rk}
If we assume that $f:X\to Y$ is a holomorphic fiber bundle with base $Y$ and fiber $F$ Calabi-Yau manifolds, with $b_1(Y)=0$, but $F$ not necessarily a torus, then we can conclude that
$f$ is equal to the composition of a holomorphic fiber bundle with Calabi-Yau fiber over $Y\times T$ where $T$ is a torus, composed with the trivial bundle $Y\times T\to Y$.

Indeed, Campana \cite{Ca2} and Fujiki \cite{Fu2} constructed a relative Albanese map for $f:X\to Y$ which is a commutative diagram of compact complex manifolds and holomorphic maps
\[
\xymatrix{
X\ar[rr]^{g}\ar[dr]_{f} & & \mathrm{Alb}(X/Y)\ar[dl]^{A}\\
 & Y & }
\]
where $A:\mathrm{Alb}(X/Y)\to Y$ is a smooth submersion with fiber $A^{-1}(y)\cong \mathrm{Alb}(X_y)$ the Albanese torus of $X_y=f^{-1}(y)\cong F$, and with $g|_{X_y}:X_y\to A^{-1}(y)$
isomorphic to the Albanese map $A_y:X_y\to \mathrm{Alb}(X_y)$ (for all $y\in Y$). Now all fibers $X_y$ are biholomorphic to $F$, and so their Albanese tori are all biholomorphic to $\mathrm{Alb}(F)$, and therefore $A$ is a holomorphic fiber bundle by the Fischer-Grauert theorem \cite{FG}.
Since the fiber $X_y\cong F$ is Calabi-Yau, its Albanese map $A_y$ is a holomorphic fiber bundle with connected fiber $F_1$ which is Calabi-Yau, thanks to \cite{Cal}.

We claim now that $g$ is a submersion. Indeed, pick any point $z\in \mathrm{Alb}(X/Y)$, let $y=A(z)$, and let $V$ be any tangent vector to $\mathrm{Alb}(X/Y)$ at $z$.
Then $A_*V=0$ if and only if $V$ is tangent to $\mathrm{Alb}(X_y)$ (the fiber of $A$). Since $f$ is a submersion, we can find a vector $W$ tangent to $X$ at some point $x$ in the fiber $X_y$ such that $f_*W=A_*V$. Hence $A_*(g_*W-V)=0$, i.e. $g_*W-V$ is tangent to $\mathrm{Alb}(X_y)$.
Since $g|_{X_y}$ is isomorphic to $A_y:X_y\to \mathrm{Alb}(X_y)$ which is a bundle, there is a vector $Z$ tangent to $X$ at $x\in X_y$ with $g_*Z=g_*W-V,$ and so $g_*(W-Z)=V$ and $g$ is a submersion. In particular, since $X$ is K\"ahler, we see that $\mathrm{Alb}(X/Y)$ is K\"ahler too.

Since all fibers of $g$ are biholomorphic to $F_1$, the Fischer-Grauert theorem implies that $g$ is a holomorphic fiber bundle. Now $A:\mathrm{Alb}(X/Y)\to Y$
is a holomorphic fiber bundle with total space K\"ahler, $Y$ simply connected, and fiber a torus, hence this bundle must be trivial thanks to Theorem \ref{bund}.
We thus obtain a new diagram
\[
\xymatrix{
X\ar[rr]^{g'}\ar[dr]_{f} & & Y\times T\ar[dl]^{\pi_Y}\\
 & Y & }
\]
where $T$ is a torus and $g'$ is still a holomorphic fiber bundle with fiber $F_1$ Calabi-Yau.
\end{rk}


\begin{thebibliography}{99}
\bibitem{A} F. Ambro, {\em The moduli $b$-divisor of an lc-trivial fibration}, Compos. Math. {\bf 141} (2005), no. 2, 385--403.
\bibitem{Be} A. Beauville, {\em Vari\'et\'es K\"ahleriennes dont la premi\`ere classe de Chern est nulle}, J. Differential Geom. {\bf 18} (1983), no. 4, 755--782.
\bibitem{Ber} B. Berndtsson, {\em Curvature of vector bundles associated to holomorphic fibrations}, Ann. of Math. (2) {\bf 169} (2009), no. 2, 531--560.
\bibitem{Bo} F.A. Bogomolov, {\em K\"ahler manifolds with trivial canonical class}, Math. USSR Izv. {\bf 8} (1974), no. 1, 9--20.
\bibitem{BDPP} S. Boucksom, J.-P. Demailly, M. P\u{a}un, T. Peternell, {\em The pseudo-effective cone of a compact K\"ahler manifold and varieties of negative Kodaira dimension}, J . Algebraic Geom. {\bf 22} (2013), 201--248.
\bibitem{Bl} A. Blanchard, {\em Sur les vari\'et\'es analytiques complexes}, Ann. Sci. \'Ecole Norm. Sup. (3) {\bf 73} (1956), 157--202.
\bibitem{BG} R.L. Bryant, P.A. Griffiths, {\em Some observations on the infinitesimal period relations for regular threefolds with trivial canonical bundle}, in {\em Arithmetic and geometry, Vol. II}, 77--102, Progr. Math., 36, Birkh\"auser, Boston, 1983.
\bibitem{Cal} E. Calabi, {\em On K\"ahler manifolds with vanishing canonical class}, in {\em Algebraic geometry and topology. A symposium in honor of S. Lefschetz},  pp. 78--89. Princeton University Press, Princeton, N. J., 1957.
\bibitem{Ca2} F. Campana, {\em R\'eduction d'Alban\`ese d'un morphisme propre et faiblement k\"ahl\'erien. I}, Compositio Math. {\bf 54} (1985), no. 3, 373--398.
\bibitem{CMP} J. Carlson, S. M\"uller-Stach, C. Peters, {\em Period mappings and period domains},  Cambridge University Press, 2003.
\bibitem{COP} F. Catanese, K. Oguiso, T. Peternell, {\em On volume-preserving complex structures on real tori}, Kyoto J. Math. {\bf 50} (2010), no. 4, 753--775.
\bibitem{CCL} Z. Chen, S.-Y. Cheng, Q. Lu, {\em On the Schwarz lemma for complete K\"ahler manifolds}, Sci. Sinica {\bf 22} (1979), no. 11, 1238--1247.
\bibitem{De} P. Deligne, {\em Travaux de Griffiths}, S\'em. Bourbaki Exp. 376, 213–-237, Lecture Notes in Math., 180, Springer, Berlin, 1970.
\bibitem{EF} G. Elencwajg, O. Forster, {\em Vector bundles on manifolds without divisors and a theorem on deformations}, Ann. Inst. Fourier (Grenoble) {\bf 32} (1982), no. 4, 25--51.
\bibitem{FL} H. Fang, Z. Lu, {\em Generalized Hodge metrics and BCOV torsion on Calabi-Yau moduli}, J. Reine Angew. Math. {\bf 588} (2005), 49--69.
\bibitem{Fi} J. Fine, {\em Fibrations with constant scalar curvature K\"ahler metrics and the CM-line bundle}, Math. Res. Lett. {\bf 14} (2007), no. 2, 239--247.
\bibitem{FG} W. Fischer, H. Grauert, {\em Lokal-triviale Familien kompakter komplexer Mannigfaltigkeiten}, Nachr. Akad. Wiss. G\"ottingen Math.-Phys. Kl. II (1965), 89--94.
\bibitem{Fu} A. Fujiki, {\em On automorphism groups of compact K\"ahler manifolds}, Invent. Math. {\bf 44} (1978), no. 3, 225--258.
\bibitem{Fu2} A. Fujiki, {\em Relative algebraic reduction and relative Albanese map for a fiber space in $\mathcal{C}$}, Publ. Res. Inst. Math. Sci. {\bf 19} (1983), no. 1, 207--236.
\bibitem{FS} A. Fujiki, G. Schumacher, {\em The moduli space of extremal compact K\"ahler manifolds and generalized Weil-Petersson metrics},  Publ. Res. Inst. Math. Sci.  26  (1990),  no. 1, 101--183.
\bibitem{FG2} O. Fujino, Y. Gongyo, {\em On images of weak Fano manifolds},  Math. Z. {\bf 270} (2012), no. 1-2, 531--544.
\bibitem{FM} O. Fujino, S. Mori, {\em A canonical bundle formula}, J. Differential Geom. {\bf 56} (2000), no. 1, 167--188.
\bibitem{GM} W.M. Goldman, J.J. Millson, {\em The deformation theory of representations of fundamental groups of compact K\"ahler manifolds}, Inst. Hautes \'Etudes Sci. Publ. Math. {\bf 67} (1988), 43--96.
\bibitem{Gr2} P.A. Griffiths, {\em  Periods of integrals on algebraic manifolds. II. Local study of the period mapping}, Amer. J. Math. {\bf 90} (1968), 805--865.
\bibitem{Gr3} P.A. Griffiths, {\em  Periods of integrals on algebraic manifolds. III. Some global differential-geometric properties of the period mapping}, Inst. Hautes \'Etudes Sci. Publ. Math. {\bf 38} (1970), 125--180.
\bibitem{Gr} P.A. Griffiths, ed., {\em Topics in transcendental algebraic geometry,}  Annals of Mathematics Studies, 106. Princeton University Press, Princeton, NJ, 1984.
\bibitem{GS} P.A. Griffiths, W. Schmid, {\em Locally homogeneous complex manifolds}, Acta Math. {\bf 123} (1969), 253--302.
\bibitem{GTZ} M. Gross, V. Tosatti, Y. Zhang, {\em Collapsing of abelian fibred Calabi-Yau manifolds}, Duke Math. J. {\bf 162} (2013), no. 3, 517--551.
\bibitem{GTZ2} M. Gross, V. Tosatti, Y. Zhang, {\em Gromov-Hausdorff collapsing of Calabi-Yau manifolds}, arXiv:1304.1820.
\bibitem{Ho} T. H\"ofer, {\em Remarks on torus principal bundles}, J. Math. Kyoto Univ. {\bf 33} (1993), no. 1, 227--259.
\bibitem{Ko} K. Kodaira, {\em  Complex manifolds and deformation of complex structures}, Springer-Verlag, Berlin, 2005.
\bibitem{KL} J. Koll\'ar, M. Larsen, \emph{Quotients of Calabi-Yau varieties}, in {\em Algebra, arithmetic, and geometry: in honor of Yu. I. Manin. Vol. II}, 179--211,
Progr. Math., 270, Birkh\"auser Boston, Inc., Boston, MA, 2009.
\bibitem{Li} D.I. Lieberman, {\em  Compactness of the Chow scheme: applications to automorphisms and deformations of K\"ahler manifolds}, 140--186, Lecture Notes in Math., 670, Springer, Berlin, 1978.
\bibitem{Lu} Z. Lu, {\em On the geometry of classifying spaces and horizontal slices},  Amer. J. Math. {\bf 121} (1999), no. 1, 177--198.
\bibitem{Ro} H.L. Royden, {\em The Ahlfors-Schwarz lemma in several complex variables}, Comment. Math. Helv. {\bf 55} (1980), no. 4, 547--558.
\bibitem{ST} J. Song, G. Tian {\em Canonical measures and K\"ahler-Ricci flow}, J. Amer. Math. Soc. {\bf 25} (2012), no. 2, 303--353.
\bibitem{Tj} G.N. Tjurina, {\em Deformation of complex structures on algebraic varieties}, Soviet Math. Dokl. {\bf 4} (1963), 1567--1574.
\bibitem{To1} V. Tosatti, {\em A general Schwarz Lemma for almost-Hermitian manifolds}, Comm. Anal. Geom. {\bf 15} (2007), no.5, 1063-1086.
\bibitem{To} V. Tosatti, {\em  Adiabatic limits of Ricci-flat K\"ahler metrics}, J. Differential Geom. {\bf 84} (2010), no. 2, 427--453.
\bibitem{TWY} V. Tosatti, B. Weinkove, X. Yang, {\em Collapsing of the Chern-Ricci flow on elliptic surfaces}, arXiv:1302.6545.
\bibitem{VZ} E. Viehweg, K. Zuo, {\em On the isotriviality of families of projective manifolds over curves}, J. Algebraic Geom. {\bf 10} (2001), no. 4, 781--799.
\bibitem{Yau} S.-T. Yau, {\em A general Schwarz lemma for K\"{a}hler manifolds}, Amer. J. Math.  {\bf 100} (1978), 197--204.
\bibitem{Y} S.-T. Yau, {\em On the Ricci curvature of a compact K\"ahler manifold and the complex Monge-Amp\`ere equation, I}, Comm. Pure Appl. Math. {\bf 31} (1978), no.3, 339--411.
\bibitem{ZZ} Y. Zhang, K. Zuo, {\em Calabi-Yau varieties with semi-stable fibre structures}, Manuscripta Math. {\bf 126} (2008), no. 3, 273--291.
\end{thebibliography}
 \end{document}